\DeclarePairedDelimiter\abs{\lvert}{\rvert}
\let\oldabs\abs
\def\abs{\@ifstar{\oldabs}{\oldabs*}}
\let\c@table\c@figure 
\let\ftype@table\ftype@figure 
\newtheorem{define}[theorem]{Definition}
\newcommand{\R}{\mathbb{R}}
\DeclareMathOperator{\E}{\mathbb{E}}   
\DeclareMathOperator{\fl}{\mathrm{fl}}  
\let\outputs\algorithmicensure{ }
\title{ 
Probabilistic Error Analysis For Sequential Summation of Real Floating Point Numbers\thanks{Funding: This work was supported in part by National Science Foundation grant DMS-1760374.}
}
\author{Johnathan Rhyne\thanks{Department of Mathematics, North Carolina State University, Raleigh, NC 27695-8205, USA, (jprhyne@ncsu.edu)}} 
\begin{document}
    \maketitle
    \begin{center}
    	Advisor: Ilse C.F. Ipsen\footnote{Department of Mathematics, North Carolina State University, Raleigh, NC 27695-8205, USA, (ipsen@ncsu.edu)}
    \end{center}
    \begin{abstract} We propose probabilistic models to bound the forward error in the numerically computed sum of a vector with $n$ real elements. To do so, we generate our own deterministic bound for ease of comparison, and then create a model of our errors to generate probabilistic bounds of a comparable structure that can typically be computed alongside the actual computation of the sum. \\
    \indent The errors are represented as bounded, independent, zero-mean random variables. We have found that our accuracy is increased when we use vectors that do not sum to zero. This accuracy reaches to be within 1 order of magnitude when all elements are of the same sign. We also show that our bounds are informative for most cases of IEEE half precision and all cases of single precision numbers for a for a vector of dimension, $n \leq 10^7$. \\
    \indent Our numerical experiments confirm that the probabilistic bounds are tighter by 2 to 3 orders of magnitude than their deterministic counterparts for dimensions of at least 80 with extremely small failure probabilities. The experiments also confirm that our bounds are much more accurate for vectors consisting of elements of the same sign.
    \end{abstract}
    
    \begin{keywords}
        roundoff errors, random variables, sums of random variables, Martingales, forward error
    \end{keywords}

    \begin{AM}
        65F99, 65G50, 60G42, 60G50
    \end{AM}
	\section{Introduction}
    \paragraph{Background}
	We consider the summation of $n$ real numbers in IEEE floating point arithmetic. First, we generate a deterministic bound for the relative error, introduce two probabilistic inequalities that we need to create our probabilistic bounds, and we confirm that our probabilistic bounds are tighter than their deterministic counterparts typically by 2 orders of magnitude. Finally, we note both the best cases of our bounds and under what conditions they fail along with future directions to possibly make better bounds.
    \paragraph{Other Work} There have been probabilistic approaches for roundoff analysis applied to inner products (Ipsen and Zhou \cite{Ipsen2020}), matrix inversion (von Neumann \& Goldstine \cite{Neumann1947} and Tienari \cite{Tienari1970}), and LU decomposition and linear system solutions (Babuška \& S\"oderlind \cite{Babuska2018},  Higham \& Mary \cite{Higham2019}), and Barlow \& Bareiss \cite{Bareiss1980, Barlow1985, Barlow1985a}. Higham \& Mary also approach probabilistic analysis for summations in \cite[Section 2]{Higham2020}, however their model takes into account the structure of the data being summed, so a meaningful comparison is difficult. Although roundoff errors do not behave as random variables, \cite[Page 2]{Hull1966} and \cite[Page 17]{Kahan1996}, our bounds give much more realistic results than deterministic ones, and we confirm this for $n \leq 10^7$ with our numerical experiments.
    \paragraph{Contributions}
	I was inspired by the work in \cite{Ipsen2020} on roundoff errors in the computation of inner products. There, roundoff errors are represented as random variables and the probabilistic bounds are derived from concentration inequalities. Here, we construct two models of our roundoff errors and also apply probabilistic concentration inequalities. Our probabilistic bounds also are expressed in terms of a chosen failure probability, so we can more easily see how our failure tolerance affects the accuracy of the bounds.
	
	\section{Floating Point Arithmetic} \label{sec:FPA}
	Before deriving the bounds, we give an example of floating point arithmetic. Since a computer has a finite amount of storage, it is necessary to approximate real numbers like $\frac{1}{5}$ and $\frac{1}{3}$.
	The IEEE society defines conventions for storing real numbers as sums of powers of 2 \cite{IEEE}	
	 Figure \ref{fig:FPNum} shows the example of $\frac{5}{32}$. Therefore, numbers like $\frac{1}{3}$
	 cannot be represented exactly in floating point arithmetic, and rounding error occur when storing the numbers that are not exact sums of powers of two.

	\begin{figure}[H]
		\centering
		\includegraphics[width=.75\textwidth]{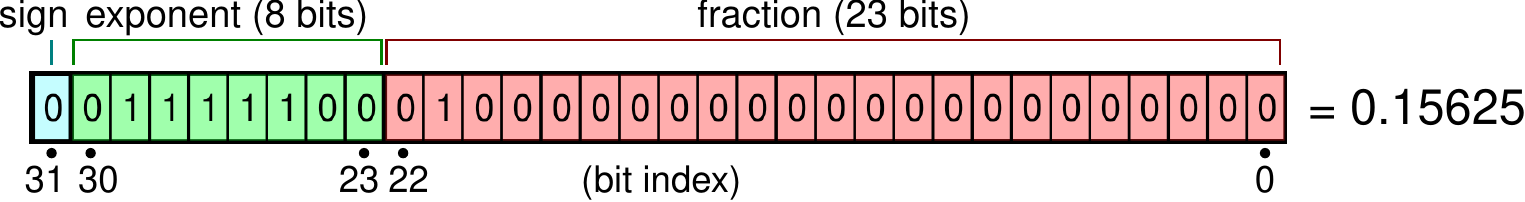}
		\caption{Artistic rendition of how $\frac{5}{32}$ is stored as a 32-bit floating point number defined \cite{IEEE}. Image courtesy of \cite{FloatExample}.} \label{fig:FPNum}
	\end{figure}
	
The difference between 1 and the next largest floating number is called \textit{machine epsilon}, $\varepsilon$, 
and the \textit{unit roundoff} is half of machine epsilon, $u=\varepsilon/2$ \cite[Section 2.1]{Higham2002}.

Table \ref{tab:UnitRoundoff} presents the unit roundoff for floating
point numbers in single and half precision.
	\begin{table}
		\begin{center}
			\caption{Unit roundoff for IEEE \cite{IEEE} single and half precision floating point numbers.} \label{tab:UnitRoundoff}
			\begin{tabular}{|l|r|}
			\hline
			\textbf{Half Precision} & \textbf{Single Precision} \\
			\hline
			$u = 2^{-11}$ & $u = 2^{-24}$ \\
			\hline
			\end{tabular}
		\end{center}
	\end{table}

In this paper, we consider roundoff errors that occur in addition, and use the standard model
\cite[(2.4)]{Higham2002}
	\begin{equation} \label{eq:FPModel}
		\fl(a + b) = (a + b)(1 + \delta) = a + b + \delta(a + b)  \qquad \text{where}\quad \abs{\delta} \leq u.
	\end{equation}
Intuitively, this means floating point addition is exact addition with some small error that is scaled by the numbers 
that are being added.

	\section{Deterministic Bounds} \label{sec:DetBounds} 
 	We derive one deterministic bound based on the actual errors incurred during floating point addition.
 
 Given $n$ real numbers $x_k$, $1\leq k\leq n$, 
 we compute their sum $\sum_{k=1}^{n}{x_k}$ by adding one element after the other.
 Table \ref{tab:FPRep} shows the order of summation in exact arithmetic, and
 the roundoff errors in floating point arithmetic.
 We denote by $\fl\left(\sum_{k=1}^{n}{x_k}\right)$ the sum computed in floating point arithmetic.
 
    \paragraph{Assumptions} We assume that the $x_k$ are floating point numbers 
    so they can be stored exactly and do not incur representation errors. The only roundoff errors occur during the  summation,
    as shown in (\ref{eq:FPModel}).  The roundoff errors in the individual additions are $\delta_k$, where 
    $\abs{\delta_k} \leq u$, $2\leq k\leq n$, as described in Section~\ref{sec:FPA}.
    We also assume $\sum_{k=1}^{n}{x_k} \neq 0$ so that the relative error is defined. If the sum is $0$, then 
 we can remove the denominator to obtain absolute bounds.

    \begin{table}[H]
    \begin{center}
    \caption{Exact and computed sums}
    \label{tab:FPRep} 
    \begin{tabular}{|l|r|r|} 
      \hline
      \textbf{Exact computation} & \textbf{Floating point arithmetic}&\textbf{Index range}\\
      \hline
      $z_1 = x_1$ & $\hat{z}_1 = x_1$ &\\
      $z_2 = x_1 + x_2$ & $\hat{z}_2 = (x_1 + x_2)(1 + \delta_2)$&\\
      $z_k = z_{k-1} + x_k$ & $\hat{z}_k =  (\hat{z}_{k-1} + x_k)(1 + \delta_k)$& $2 \leq k \leq n$\\
      \hline
      $z_n = \sum_{k=1}^{n}{x_k}$ & $\hat{z}_n = \fl\left(\sum_{k=1}^{n}{x_k}\right)$&\\
      \hline
    \end{tabular}
    \end{center}
    \end{table}
    
  In Table~\ref{tab:FPRep}, $z_k$ represents the exact $k^{th}$ partial sum, while $\hat{z}_k$ represents the 
    $k^{th}$ partial sum computed in floating point arithmetic. 

	\begin{table}[H]
    \begin{center}
    \caption{Variables in the first representation and range of indices}
    \label{tab:variableConstruction}
    \begin{tabular}{|l|r|} 
      \hline
      \textbf{Construction} & \textbf{Index range}\\
      \hline
      $c_1 = 0$ &\\
      $c_k = u\sum_{\ell=1}^k{\abs{x_\ell}} = u\|x_{(k)}\|_1$ & $2 \leq k \leq n$ \\
      \hline
      $Z_k = \delta_k\sum_{\ell=1}^kx_\ell$ & $2 \leq k \leq n$\\
      \hline
      $Z = \sum\limits_{j=2}^{n}{Z_j}$ = $\fl\left(\sum_{k=1}^{n}{x_k}\right) - \sum_{k=1}^{n}{x_k}$& \\
      \hline
    \end{tabular}
    \end{center}
    \end{table}

In Table~\ref{tab:variableConstruction}, $Z$ represents the absolute error in the floating point sum,
$Z_k$ represents all the terms affected by the $k^{th}$ error,  and $c_k$ is an
 upper bound for $|Z_k|$. Note that all summands in $c_k$ are multiples of $u$, that is, 
 $c_k =|x_k|(u+\cdots)$.

    \begin{theorem} \label{thm:cDetBound}  For $1 \leq k \leq n$, let $x_k \in \R$ where $\sum_{k=1}^{n}{x_k} \neq 0$, $c_k$ in Table \ref{tab:variableConstruction}, $\hat{z}_n$ and $z_n$ in Table \ref{tab:FPRep}. 
    Then
    \begin{equation} \label{eq:cDetBound}
            \abs{\dfrac{\hat{z}_n - z_n}{z_n}} \leq \dfrac{{\sum_{k=2}^{n}{c_k}}}{\abs{z_n}}.
        \end{equation}
    \end{theorem}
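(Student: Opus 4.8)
The plan is to read the error representation straight off Table~\ref{tab:variableConstruction} and then apply the triangle inequality twice: once to split the total error into its per-step contributions $Z_j$, and once to bound each $Z_j$ by the corresponding $c_j$. No machinery beyond $|\delta_j|\le u$ from the standard model~(\ref{eq:FPModel}) is required.

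First I would recall that the last two rows of Table~\ref{tab:variableConstruction} express the absolute error as a sum of the $n-1$ individual roundoff contributions,
\[
\hat{z}_n - z_n = Z = \sum_{j=2}^{n} Z_j, \qquad Z_j = \delta_j \sum_{\ell=1}^{j} x_\ell .
\]
Since $z_n = \sum_{k=1}^{n} x_k \neq 0$ by hypothesis, the relative error is well defined; dividing by $z_n$ and taking absolute values gives $\left|(\hat{z}_n - z_n)/z_n\right| = |Z|/|z_n|$. The whole task is therefore to bound $|Z|$ from above by $\sum_{k=2}^{n} c_k$.

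Next I would estimate $|Z|$. Applying the triangle inequality to the sum yields $|Z| = \left|\sum_{j=2}^{n} Z_j\right| \le \sum_{j=2}^{n} |Z_j|$. The key step is the per-term bound $|Z_j| \le c_j$: combining $|\delta_j| \le u$ with a second application of the triangle inequality to the inner partial sum gives
\[
|Z_j| = |\delta_j|\,\left|\sum_{\ell=1}^{j} x_\ell\right| \le u \sum_{\ell=1}^{j} |x_\ell| = c_j .
\]
Summing over $j$ and dividing by $|z_n|$ then produces exactly~(\ref{eq:cDetBound}).

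I do not expect a genuine technical obstacle; the argument is two triangle inequalities and the defining bound $|\delta_j|\le u$. The one point deserving care is conceptual rather than computational: the identity $\hat{z}_n - z_n = \sum_{j} Z_j$ is the first-order error model, since it discards the $O(u^2)$ cross terms (such as $\delta_i\delta_j z_i$) that appear when the nested factors $(1+\delta_k)$ are fully expanded. Because Table~\ref{tab:variableConstruction} supplies this identity as the working definition of $Z$, I would simply invoke it and treat the resulting bound as a bound on the modeled error; a fully rigorous variant would instead bound $\left|\prod_{k=j}^{n}(1+\delta_k)-1\right|$ and absorb the higher-order terms at the cost of replacing $u$ by a slightly inflated constant.
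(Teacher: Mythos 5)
Your proof is correct and follows essentially the same route as the paper's: bound each $|Z_j|$ by $c_j$ using $|\delta_j|\le u$ and the triangle inequality, sum over $j$, and divide by $|z_n|$. Your closing caveat is also well taken --- the identity $\hat{z}_n - z_n = \sum_{j=2}^{n} Z_j$ asserted in Table~\ref{tab:variableConstruction} is indeed only a first-order error model that discards the $O(u^2)$ cross terms, a point the paper's own proof does not flag.
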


	\begin{proof}
		We first show that the $Z_k$ values are bounded above by $c_k$ for $2 \leq k \leq n$. These bounds follow directly from our assumption on our roundoff errors, $\abs{\delta} \leq u$.
		\[
                    \abs{Z_k} = \abs{\delta_k\sum_{\ell=1}^kx_\ell} \leq u\sum_{\ell=1}^k\abs{x_k} = c_k.
		\]
		Next, we apply these bounds to the final sum, $Z$,
		
		\[
        	\abs{\hat{z}_n - z_n} = {\abs{Z}} \leq {\sum_{k=2}^n{c_k}}.
        \]
        Finally, we divide both sides by $\abs{z_n}$ to get our desired bound of
        \[
        	\abs{\dfrac{\hat{z}_n - z_n}{z_n}} \leq \dfrac{{\sum_{k=1}^{n}{c_k}}}{\abs{z_n}}.
        \]
        
	\end{proof}
	
  	By the construction of each $c_k$ in Table \ref{tab:variableConstruction}, $u$ is multiplied by the one norm of our data. 
        This is the connection between this bound and the precision of floating point being used. So, when $u$ is relatively large when compared to 
        $n$, as is noted for the half precision case in Table \ref{tab:UnitRoundoff}, we would want a bound that is much tighter since we can't 
        rely on the small nature of $u$. To achieve this, we must relinquish bounding the actual errors themselves, and instead bound a 
        probabilistic model of these errors.
    
	\section{Background Information} \label{sec:BI}
    Before we move to our probabilistic bounds, we explicitly state Azuma's inequality and the Azuma-Hoeffding Martingale inequality along with any relaxed assumptions for our case. We also use $\E[Z]$ to denote the expected value of the random variable $Z$.
    \begin{lemma}[Theorem 5.3 in \cite{Chung2006}] \label{thm:Azuma} 
    Let $A = A_1 + \cdots + A_n$ be a sum of independent real-valued random variables,  $0 \leq a_k$ for $1 \leq k \leq n$, $0 < \delta < 1$, and
        \[
            \abs{A_k - \E[A_k]} \leq a_k \quad \quad 1 \leq k \leq n.
        \]
        Then with probability at least $1 - \delta$
        \[
            \abs{A - \E[A]} \leq \sqrt{2\ln{\frac{2}{\delta}}}\sqrt{\sum\limits_{k=1}^{n}{a_k^2}}.
        \]
        \end{lemma}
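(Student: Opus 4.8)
The plan is to prove this two-sided tail bound by the classical Chernoff (exponential moment) method, reducing the entire argument to a single estimate on the moment generating function of each centered summand. First I would center the variables: set $X_k = A_k - \E[A_k]$, so that each $X_k$ has zero mean and satisfies $\abs{X_k} \leq a_k$, and write $S = A - \E[A] = \sum_{k=1}^{n} X_k$. The goal then becomes to show $\Pr[\abs{S} \geq s] \leq 2\exp\!\left(-s^2/(2\sum_{k} a_k^2)\right)$ for every $s > 0$, after which the stated threshold falls out by setting the right-hand side equal to $\delta$ and solving for $s$.

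For the upper tail I would fix $t > 0$ and apply Markov's inequality to $e^{tS}$, exploiting independence to factor the expectation:
\[
\Pr[S \geq s] = \Pr\!\left[e^{tS} \geq e^{ts}\right] \leq e^{-ts}\,\E\!\left[e^{tS}\right] = e^{-ts}\prod_{k=1}^{n} \E\!\left[e^{tX_k}\right].
\]
Everything now hinges on controlling each factor $\E[e^{tX_k}]$.

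The main obstacle, and the one genuinely nontrivial step, is Hoeffding's lemma: for a zero-mean random variable $X$ supported on $[-a, a]$, one has $\E[e^{tX}] \leq \exp(t^2 a^2/2)$. I would establish this by convexity of the map $x \mapsto e^{tx}$, which on $[-a,a]$ lies below the chord joining its endpoint values; taking expectations and using $\E[X] = 0$ bounds $\E[e^{tX}]$ by $e^{\psi(h)}$, where $h = 2ta$ and $\psi$ is a cumulant-type function satisfying $\psi(0) = \psi'(0) = 0$ together with the uniform estimate $\psi''(h) \leq 1/4$. A second-order Taylor expansion then yields $\psi(h) \leq h^2/8 = t^2 a^2/2$, which is exactly the claimed bound.

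Feeding this back into the product gives $\E[e^{tS}] \leq \exp\!\left(\tfrac{t^2}{2}\sum_{k} a_k^2\right)$, so that $\Pr[S \geq s] \leq \exp\!\left(-ts + \tfrac{t^2}{2}\sum_{k} a_k^2\right)$. Minimizing the exponent over $t$, attained at $t = s/\sum_{k} a_k^2$, produces the one-sided bound $\Pr[S \geq s] \leq \exp\!\left(-s^2/(2\sum_{k} a_k^2)\right)$. Running the identical argument with $-S$ in place of $S$ controls the lower tail, and a union bound combines the two into the two-sided estimate with its factor of $2$. Finally, solving $2\exp\!\left(-s^2/(2\sum_{k} a_k^2)\right) = \delta$ for $s$ returns $s = \sqrt{2\ln(2/\delta)}\,\sqrt{\sum_{k} a_k^2}$, the threshold below which $\abs{A - \E[A]}$ lies with probability at least $1-\delta$.
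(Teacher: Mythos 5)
Your argument is correct, but it takes a more self-contained route than the paper. The paper's proof of this lemma does essentially no probabilistic work: it quotes the two-sided tail bound $\Pr[\abs{A - \E[A]} \geq t] \leq 2\exp\left(-t^2/(2\sum_{k=1}^{n} a_k^2)\right)$ directly from Theorem 5.3 of the cited reference, and the entire content of the proof is the final step you also perform --- setting the right-hand side equal to $\delta$ and solving for $t$ to restate the inequality in with-probability-at-least-$1-\delta$ form (plus a separate remark handling the degenerate case where every $a_k = 0$). You instead prove the tail bound from scratch via the Chernoff method: centering, Markov's inequality applied to $e^{tS}$, factoring the moment generating function by independence, Hoeffding's lemma $\E[e^{tX}] \leq e^{t^2 a^2/2}$ for a zero-mean variable on $[-a,a]$, optimizing over $t$, and a union bound for the two tails. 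What your approach buys is independence from the external reference and an explicit identification of where each hypothesis (independence, boundedness, zero mean after centering) is used; what the paper's approach buys is brevity. One small point: your optimal choice $t = s/\sum_k a_k^2$ implicitly assumes $\sum_k a_k^2 > 0$; in the degenerate case $a_1 = \cdots = a_n = 0$ the conclusion still holds trivially since $A = \E[A]$ almost surely, which is exactly the case the paper splits off explicitly, and you should note it for completeness.
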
  
        \begin{proof} We have two cases to consider. The first being if $c_k = 0$ for $1 \leq k \leq n$.
        If this is the case, then we set the right hand side to be $0$ because this means, from the linearity of the expectation,
        \[
        	\sum_{k=1}^{n}{A_k} = \E\left[\sum_{k=1}^{n}{A_k} \right] \implies A = \E[A] \implies \abs{A - \E[A]} = 0.
		\]
        Otherwise, in \cite[Theorem 5.3]{Chung2006} set
        \[
            \delta = \text{Pr}[\abs{A - \E[A]} \geq t] \leq 2\text{exp}\left( -\dfrac{t^2}{2\sum_{k=1}^{n}{a_k^2}} \right).
        \]
        Then, we solve for $t$ in terms of $\delta$ to get 
		\[
			t \leq \sqrt{2\ln\frac{2}{\delta}}\sqrt{\sum\limits_{k=1}^{n}{a_k^2}}.
		\]
                Since $\delta$ is the probability that $\abs{A - \E[A]} \geq t$, then $\abs{A - \E[A]} \leq t$ holds with probability at least $1 - \delta$.
        \end{proof}

		This inequality tells us, for any fixed failure probability $\delta$, how much a sum of random variables differs from its expectation given each summand has bounded difference from its own expectation. 
      
	Before we can move to our next inequality, we first define a Martingale.
	\begin{define}[Theorem 12.1 in \cite{Mitzenmacher2006}] \label{def:Martingale} 
        A collection of random variables, $B_1,B_2,\dots, B_n$ is called Martingale if the following are satisfied
        \begin{enumerate}
            \item $\E[\abs{B_n}]$ is finite.
            \item $\E[B_{k}|B_1,\dots,B_{k-1}] = B_{k-1}$
        \end{enumerate}
        This is also referred to as being a Martingale with respect to itself.
    \end{define}
	
    \begin{lemma}[Theorem 12.4 in \cite{Mitzenmacher2006}] \label{thm:Hoeffding}
    Let $B_1 = 0$, $B_2, \, \dots, \, B_{n} $ be a Martingale with respect to itself, as in Definition \ref{def:Martingale}. Also let $0 \leq b_k$ for $1 \leq k \leq n-1$, and $0 < \delta < 1$
    If
    \[
        \abs{B_k - B_{k-1}} \leq b_{k-1} \qquad \text{for } 2 \leq k \leq n,
    \]
    then with probability at least $1 - \delta$
    \[
        \abs{B_{n} - B_1} \leq \sqrt{2\ln{\frac{2}{\delta}}}\sqrt{\sum\limits_{k=1}^{n-1}{b_k^2}}.
    \]
    \end{lemma}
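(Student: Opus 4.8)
The plan is to mirror the proof of Lemma~\ref{thm:Azuma} almost verbatim, since the only structural change is that the bounded-difference hypothesis now concerns the martingale increments $B_k - B_{k-1}$ rather than the deviations of independent summands from their means. The cited result, Theorem~12.4 in \cite{Mitzenmacher2006}, already supplies a two-sided tail bound in exponential form, so the entire task reduces to inverting that bound to express the deviation threshold in terms of the prescribed failure probability $\delta$.

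First I would dispose of the degenerate case, exactly as was done for the $a_k = 0$ situation at the start of the proof of Lemma~\ref{thm:Azuma}. If $b_k = 0$ for all $1 \leq k \leq n-1$, then the hypothesis $\abs{B_k - B_{k-1}} \leq b_{k-1}$ forces every increment to vanish, so $B_n = B_{n-1} = \cdots = B_1$ and hence $\abs{B_n - B_1} = 0$. The claimed inequality then holds trivially, with its right-hand side equal to $0$.

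Otherwise $\sum_{k=1}^{n-1} b_k^2 > 0$, and I would invoke \cite[Theorem~12.4]{Mitzenmacher2006} to obtain, for any $t > 0$,
\[
    \text{Pr}\left[\abs{B_n - B_1} \geq t\right] \leq 2\,\text{exp}\left(-\dfrac{t^2}{2\sum_{k=1}^{n-1}{b_k^2}}\right).
\]
Setting the right-hand side equal to $\delta$ and solving for $t$ yields
\[
    t = \sqrt{2\ln\frac{2}{\delta}}\,\sqrt{\sum_{k=1}^{n-1}{b_k^2}},
\]
which is precisely the algebraic inversion already performed for Lemma~\ref{thm:Azuma}. Since $\delta$ is an upper bound on the probability of the event $\abs{B_n - B_1} \geq t$, the complementary event $\abs{B_n - B_1} \leq t$ holds with probability at least $1 - \delta$, giving the assertion.

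Because the result is a routine specialization of a cited theorem, I do not anticipate any genuine mathematical obstacle; the difficulty is entirely bookkeeping. The two points demanding care are, first, respecting the two-sided nature of the tail bound so that the factor $2$ lands inside the logarithm (producing $\ln(2/\delta)$ rather than $\ln(1/\delta)$), and second, tracking the index shift: the martingale has $n$ terms $B_1,\dots,B_n$ but only $n-1$ increments, bounded by $b_1,\dots,b_{n-1}$, so the sum of squares must terminate at $n-1$ to match both the hypothesis and the cited inequality.
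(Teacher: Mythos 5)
Your proposal matches the paper's own proof essentially step for step: dispose of the all-zero $b_k$ case, then invert the two-sided tail bound from \cite[Theorem~12.4]{Mitzenmacher2006} to solve for $t$ in terms of $\delta$, exactly as was done for Lemma~\ref{thm:Azuma}. If anything, your handling of the degenerate case (arguing $B_n = \cdots = B_1$ from the vanishing increments rather than asserting each $B_k$ is zero) is slightly cleaner than the paper's, but the route is the same.
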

    \begin{proof}
    	Much like in Lemma \ref{thm:Azuma}, we first address the case of $b_k = 0$ for $2 \leq k \leq n$. If so, we set the right hand side to be $0$ as this would mean that each $B_k$ is itself $0$. Otherwise, we do the following. 
    	In \cite[Theorem 12.4]{Mitzenmacher2006}, set
    	\[
    		\delta = \text{Pr}[\abs{B_n - B_1} \geq t] \leq 2\text{exp}\left( -\dfrac{t^2}{2\sum_{k=1}^{n-1}{b_k^2}} \right).
    	\]
    	 Since $\delta$ is the probability that $\abs{B_n - B_1} \geq t$, then  $\abs{B_n - B_1} \leq t$ holds with probability $1 - \delta$.
    \end{proof}

     The modification that we make for the preceding Lemma is that our indexing of the $b_k$ values range from $1 \leq k \leq n-1$ instead of 
     $1 \leq k \leq n$. This does not change the result since \cite{Mitzenmacher2006} suggests defining the first element to be zero, which means 
     we only have $n-1$ non-zero values to be bounded. The difference between Lemmas \ref{thm:Azuma} and \ref{thm:Hoeffding} is that the former 
     tells us the probability that a sum of random variables differs from its expected value, while the latter tells us how likely the $n^{th}$ 
     term differs from the first term with a chosen failure probability $\delta$.
    
    \section{Probabilistic Bounds}\label{sec:ProbBounds} Moving forward, instead of bounding the actual relative errors that are incurred, we instead 
    model our roundoff errors as zero mean random variables. To reduce extraneous variables, we use the same notation as in Tables \ref{tab:FPRep} 
    and \ref{tab:variableConstruction}. This change comes with the following assumptions, for $2 \leq k \leq n$, $\E[\delta_k] = 0$ and each 
    $\delta_k$ are mean independent. Now, we show that this new model, with the same representation as in Tables \ref{tab:FPRep} and 
    \ref{tab:variableConstruction}, gives us much tighter bounds than in Section \ref{sec:DetBounds}.
    
	Before we construct our bounds, we show that $Z$, in Table \ref{tab:variableConstruction}, satisfies the requirements to be used in Lemma \ref{thm:Azuma}.
	We already have the bounded property from the proof of Theorem \ref{thm:cDetBound}, so we need only show the zero mean property.
	
	\begin{lemma}\label{thm:ZProps}
		Assuming $Z$ and $Z_k$ for $2 \leq k \leq n$ in Table \ref{tab:variableConstruction}. Then
		\[
			\E[Z] = 0 \text{ and } \E[Z_k] = 0. \qquad 2 \leq k \leq n
		\]
		\end{lemma}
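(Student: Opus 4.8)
The plan is to prove the two statements in sequence, first establishing $\E[Z_k] = 0$ for $2 \leq k \leq n$ and then deducing $\E[Z] = 0$ by linearity of expectation. The key observation that drives everything is that, under the standing assumption that the $x_\ell$ are floating point numbers stored exactly, each partial sum $\sum_{\ell=1}^{k} x_\ell$ is a deterministic quantity with respect to the randomness carried by the roundoff errors $\delta_k$. Consequently it is a constant that may be pulled out of any expectation whose only source of randomness is a $\delta$.

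First I would handle $\E[Z_k]$. Taking the definition $Z_k = \delta_k \sum_{\ell=1}^{k} x_\ell$ from Table~\ref{tab:variableConstruction} and factoring out the constant partial sum gives $\E[Z_k] = \left(\sum_{\ell=1}^{k} x_\ell\right)\E[\delta_k]$. Invoking the modeling assumption $\E[\delta_k] = 0$ then forces $\E[Z_k] = 0$ for every $k$ in the stated range.

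Next I would obtain $\E[Z] = 0$ directly from the previous step. Since $Z = \sum_{j=2}^{n} Z_j$ is a finite sum, linearity of expectation yields $\E[Z] = \sum_{j=2}^{n} \E[Z_j]$, and the first part shows that every summand vanishes, so the total is $0$.

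Because the argument reduces to factoring out deterministic constants and applying linearity, I do not anticipate any genuine obstacle. The only point that merits care is the justification that each $\sum_{\ell=1}^{k} x_\ell$ is non-random, which rests squarely on the assumption of exact representability of the $x_\ell$. I would also note, for clarity, that the mean-independence of the $\delta_k$ is \emph{not} needed for this lemma — only the zero-mean property $\E[\delta_k] = 0$ is used here — with independence being reserved for the later application of Lemma~\ref{thm:Azuma}.
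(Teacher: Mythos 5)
Your proposal is correct and follows essentially the same route as the paper: factor the deterministic partial sum out of $\E[Z_k]$, apply $\E[\delta_k]=0$, and conclude $\E[Z]=0$ by linearity. Your remark that only the zero-mean property (and not independence) is needed is accurate and in fact slightly cleaner than the paper's proof, which invokes independence unnecessarily.
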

		\begin{proof}
			From our construction of $Z_k$ for $2 \leq k \leq n$, $x_k$ being constants, and from the independence of $\delta_k$, we have
			\[
                            \E[Z_k] = \E\left[\delta_k\sum_{\ell=1}^kx_\ell\right] = \E\left[\delta_k\right]\sum_{\ell=1}^kx_\ell = 0
                        \]
			From the linearity of the expectation
			\[
				\E[Z] = \E\left[\sum\limits_{j=2}^{n}{Z_j}\right] = 0.
			\]
		\end{proof}

     Now, we construct the probabilistic counterpart of \eqref{eq:cDetBound}.
    
    \begin{theorem}\label{thm:cProbBound}
        Let $Z$ in Table \ref{tab:variableConstruction}, $x_k \in \R$ for $1 \leq k \leq n$ ,  $\sum_{k=1}^{n}{x_k} \neq 0$, and $0 < \delta < 1$. Then with probability at least $1 - \delta$
        \begin{equation}\label{eq:cProbBound}
            \abs{\dfrac{\hat{z}_n - z_n}{z_n}} \leq \dfrac{\sqrt{2\ln{\frac{2}{\delta}}}\sqrt{\sum\limits_{k=1}^{n}{c_k^2}}}{\abs{z_n}}.
        \end{equation}
    \end{theorem}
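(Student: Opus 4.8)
The plan is to recognize that the quantity controlling the relative error is exactly the centered sum to which Azuma's inequality (Lemma~\ref{thm:Azuma}) applies. Concretely, I would set $A = Z = \sum_{k=2}^{n} Z_k$ and match each summand $A_k$ with $Z_k = \delta_k\sum_{\ell=1}^{k} x_\ell$, and match the bounding constants $a_k$ with $c_k$. Since $\E[Z] = 0$ by Lemma~\ref{thm:ZProps}, the left-hand side $\abs{A - \E[A]}$ of Azuma's inequality collapses to $\abs{Z} = \abs{\hat{z}_n - z_n}$, which after dividing by $\abs{z_n}$ is precisely the relative error appearing on the left of \eqref{eq:cProbBound}.

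The work lies in checking that the three hypotheses of Lemma~\ref{thm:Azuma} hold for this choice. First, each $Z_k$ is a function of the single random variable $\delta_k$ (the $x_\ell$ being fixed constants), so independence of the $\delta_k$ transfers directly to independence of the $Z_k$. Second, the zero-mean condition $\E[Z_k] = 0$ is exactly Lemma~\ref{thm:ZProps}. Third, the bounded-difference requirement $\abs{Z_k - \E[Z_k]} \leq c_k$ reduces, because $\E[Z_k] = 0$, to the bound $\abs{Z_k} \leq c_k$ already established in the proof of Theorem~\ref{thm:cDetBound}. A small bookkeeping point is the index range: the $Z_k$ run from $2$ to $n$ (that is, $n-1$ variables) while Lemma~\ref{thm:Azuma} is stated from $1$ to $n$; since $c_1 = 0$ one may harmlessly adjoin $Z_1 = 0$, so that $\sum_{k=1}^{n} c_k^2 = \sum_{k=2}^{n} c_k^2$ and the re-indexing costs nothing.

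With the hypotheses verified, I would invoke Lemma~\ref{thm:Azuma} to obtain, with probability at least $1-\delta$, the bound $\abs{Z} \leq \sqrt{2\ln\frac{2}{\delta}}\sqrt{\sum_{k=1}^{n} c_k^2}$, substitute $Z = \hat{z}_n - z_n$, and divide through by $\abs{z_n}$ to land on \eqref{eq:cProbBound}.

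The main obstacle is subtle and concerns the independence hypothesis rather than the algebra. Lemma~\ref{thm:Azuma} as stated requires genuinely independent summands, whereas the standing modelling assumption is only that the $\delta_k$ are mean independent with $\E[\delta_k] = 0$. If one wishes to rely only on mean independence, the clean fix is to route the argument instead through the martingale version, Lemma~\ref{thm:Hoeffding}: form the partial-sum process $B_k = \sum_{j=2}^{k} Z_j$ with $B_1 = 0$, verify the martingale property $\E[B_k \mid B_1,\dots,B_{k-1}] = B_{k-1}$ from $\E[\delta_k \mid \delta_2,\dots,\delta_{k-1}] = 0$, and note that $\abs{B_k - B_{k-1}} = \abs{Z_k} \leq c_k$. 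Because Lemmas~\ref{thm:Azuma} and~\ref{thm:Hoeffding} produce identical bounds, either path yields \eqref{eq:cProbBound}; the only care needed is to be explicit about which independence hypothesis the chosen lemma actually consumes.
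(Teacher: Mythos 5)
Your proposal is correct and follows the same route as the paper: apply Lemma~\ref{thm:Azuma} with $A_k = Z_k$ and $a_k = c_k$ to bound $\abs{Z} = \abs{\hat{z}_n - z_n}$, then divide by $\abs{z_n}$. Your verification of the hypotheses (and your observation that the stated modelling assumption of mean independence does not literally match the full independence Lemma~\ref{thm:Azuma} requires, with the martingale route as a clean fix) is more careful than the paper's one-line invocation, but the argument is essentially the same.
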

        \begin{proof}
		By applying Lemma \ref{thm:Azuma},
    \[
		\abs{\hat{z}_n - z_n} = \abs{Z} \leq \sqrt{2\ln{\frac{2}{\delta}}}\sqrt{\sum\limits_{k=1}^{n}{c_k^2}}.
    \]
    Then, we divide both sides by $\abs{z_n}$ to get our desired bound
    \[
    		\abs{\dfrac{\hat{z}_n - z_n}{z_n}} \leq \dfrac{\sqrt{2\ln{\frac{2}{\delta}}}\sqrt{\sum\limits_{k=1}^{n}{c_k^2}}}{\abs{z_n}},
    \]
    which holds with probability at least $1 - \delta$.
    \end{proof}
    Before we construct our final probabilistic bound, we first consider the following random variables.
    \begin{table}[H]
    \begin{center}
    \caption{\\ Errors in the Partial Sums and their Bounds}
    \label{tab:MDef}
    \begin{tabular}{|l|r|} 
      \hline
      \textbf{Construction} & \textbf{Valid Range}\\
      \hline
      $M_k = \hat{z}_k - z_k$ & $1 \leq k \leq n$ \\
      $M_n = \hat{z}_n - z_n$ &\\
      \hline
      $m_k = \abs{x_1}(1 + u)^{k-1} + \sum_{j=2}^{k+1}{\abs{x_j}(1+u)^{k-j+1}}$ & $1 \leq k \leq n-1$ \\
      \hline
    \end{tabular}
    \end{center}
    \end{table}
    
    Intuitively, each $M_k$ value represents the difference between the $k^{th}$ floating point and exact partial sums, which gives the error in our floating point and exact sums. The $m_k$ represent an upper bound for the errors in the $(k + 1)^{th}$ partial sum before we account for the $(k + 1)^{th}$ roundoff error, and $M_1 = \hat{z}_1 - z_1 = x_1 - x_1 = 0.$
    
	Using Table \ref{tab:MDef}, we bound a telescoping sum of each $M_k$, so we first provide a recursive characterization of our $M_k$, and then bound the absolute difference of $M_k$ and $M_{k-1}$ for $2 \leq k \leq n$.
    \begin{lemma} \label{thm:Recursion} Let $M_k$ in Table \ref{tab:MDef}, $\hat{z}_k$ in Table \ref{tab:FPRep}, and $x_k \in \R$. Then
        $M_k$ satisfies the recursion
        \[
            M_k = M_{k-1} + \delta_k(\hat{z}_{k-1} + x_k) \qquad 2 \leq k \leq n.
        \]
    \end{lemma}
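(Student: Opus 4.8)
The plan is to prove the recursion by direct substitution, since $M_k$ is assembled entirely from $\hat{z}_k$ and $z_k$, and both of these already satisfy one-step recursions. First I would write $M_k = \hat{z}_k - z_k$ from Table~\ref{tab:MDef}, then replace $\hat{z}_k$ by its floating point recursion $\hat{z}_k = (\hat{z}_{k-1} + x_k)(1 + \delta_k)$ from Table~\ref{tab:FPRep} and $z_k$ by the exact recursion $z_k = z_{k-1} + x_k$. Both replacements are legitimate precisely on the index range $2 \le k \le n$ claimed in the statement.

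Next I would expand the product $(\hat{z}_{k-1} + x_k)(1 + \delta_k) = \hat{z}_{k-1} + x_k + \delta_k(\hat{z}_{k-1} + x_k)$ and subtract $z_{k-1} + x_k$. The two copies of $x_k$ cancel, and the surviving terms regroup as $(\hat{z}_{k-1} - z_{k-1}) + \delta_k(\hat{z}_{k-1} + x_k)$. Finally I would recognize the grouped difference $\hat{z}_{k-1} - z_{k-1}$ as $M_{k-1}$, again by the definition in Table~\ref{tab:MDef}, which yields the asserted identity $M_k = M_{k-1} + \delta_k(\hat{z}_{k-1} + x_k)$.

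I will be candid that there is no real obstacle here: the claim is a one-line algebraic consequence of the definitions. The only points demanding care are (i) applying the factor $(1+\delta_k)$ to the \emph{entire} partial sum $\hat{z}_{k-1} + x_k$ rather than to $x_k$ alone, and (ii) verifying that the exact term $x_k$ cancels cleanly so that no spurious factor of $\delta_k$ attaches to it. Both are immediate once the recursions are written side by side.

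The reason this particular form is worth isolating is forward-looking rather than technical: writing $M_k - M_{k-1} = \delta_k(\hat{z}_{k-1} + x_k)$ exposes the incremental roundoff contribution explicitly. This is exactly what will later let us treat $\{M_k\}$ as a telescoping martingale-type sequence, since $\E[\delta_k]=0$ gives $\E[M_k \mid M_1,\dots,M_{k-1}] = M_{k-1}$, and the increment $|M_k - M_{k-1}| = |\delta_k|\,|\hat{z}_{k-1} + x_k|$ can then be bounded by $m_{k-1}$ from Table~\ref{tab:MDef} for application of Lemma~\ref{thm:Hoeffding}.
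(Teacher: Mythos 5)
Your proof is correct and is essentially identical to the paper's: both substitute the recursions $\hat{z}_k = (\hat{z}_{k-1}+x_k)(1+\delta_k)$ and $z_k = z_{k-1}+x_k$ into $M_k = \hat{z}_k - z_k$, expand, cancel $x_k$, and regroup to recover $M_{k-1}$. Your version just spells out the cancellation step that the paper compresses into one line.
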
 
    \begin{proof}
        We substitute in the recursive definition of $\hat{z}_k$ from Table \ref{tab:FPRep} and get
        \begin{align*}
            M_k &= (\hat{z}_{k-1} + x_k)(1 + \delta_k) - (z_{k-1} + x_k) \\
            &= M_{k-1} + \delta_k(\hat{z}_{k-1} + x_k) \qquad 2 \leq k \leq n.
        \end{align*}
    \end{proof}    

	In order to bound the absolute difference between $M_k$ and $M_{k-1}$ for $2 \leq k \leq n$, we first need to show the bounded nature of each $\hat{z}_k$
   
   \begin{lemma} \label{thm:zHatBound}
The floating point sums $\hat{z}_k$ in Table \ref{tab:FPRep} are bounded by\footnote{The sum is zero if the lower limit
exceeds the upper limit.}
    	\[
    		\abs{\hat{z}_k} \leq \abs{x_1}(1 + u)^{k-1} + \sum\limits_{j=2}^{k}\Big({\abs{x_j}(1 + u)^{k - j + 1}}\Big), 
		\qquad 1 \leq k \leq n.
    	\]
    \end{lemma}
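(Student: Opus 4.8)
The plan is to prove the bound by induction on $k$, using the recursive definition $\hat{z}_k=(\hat{z}_{k-1}+x_k)(1+\delta_k)$ from Table~\ref{tab:FPRep} together with the roundoff assumption $\abs{\delta_k}\le u$. No probabilistic machinery is needed here, since this is a purely deterministic magnitude bound that will later feed the Martingale difference estimates; the entire argument is a triangle-inequality estimate propagated through the recursion.

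For the base case $k=1$ I would note that $\hat{z}_1=x_1$, so $\abs{\hat{z}_1}=\abs{x_1}=\abs{x_1}(1+u)^{0}$, while the sum $\sum_{j=2}^{1}$ is empty and hence zero by the stated footnote convention; thus the claimed inequality holds with equality. It is worth stating this empty-sum reading explicitly so that the $k=1$ instance is genuinely covered rather than vacuous.

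For the inductive step, assuming the bound at index $k-1$, I would first apply the triangle inequality and $\abs{1+\delta_k}\le 1+u$ to the recursion, giving
\[
\abs{\hat{z}_k}\le\big(\abs{\hat{z}_{k-1}}+\abs{x_k}\big)(1+u).
\]
Substituting the inductive hypothesis for $\abs{\hat{z}_{k-1}}$ and distributing the factor $(1+u)$ then yields
\[
\abs{\hat{z}_k}\le \abs{x_1}(1+u)^{k-1}+\sum_{j=2}^{k-1}\abs{x_j}(1+u)^{k-j+1}+\abs{x_k}(1+u).
\]
The final step is to observe that the last term equals $\abs{x_k}(1+u)^{k-k+1}$, i.e.\ precisely the $j=k$ summand of the target sum, so it can be absorbed to extend the upper index from $k-1$ to $k$, producing the desired bound.

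I expect the only delicate point to be the exponent bookkeeping: confirming that each carried-over term satisfies $(1+u)^{(k-1)-j+1}\cdot(1+u)=(1+u)^{k-j+1}$, and that the freshly introduced $\abs{x_k}$ term lands exactly on the exponent $1$ demanded by the $j=k$ entry. These are routine verifications, so the proof should be short and mechanical once the induction is set up.
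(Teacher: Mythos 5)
Your induction is correct and complete, but it takes a different route from the paper. The paper's proof first writes down the fully unrolled closed form
\[
\hat{z}_k = x_1\prod_{l=2}^{k}(1+\delta_l) + \sum_{j=2}^{k}\Bigl(x_j\prod_{l=j}^{k}(1+\delta_l)\Bigr),
\]
obtained by expanding the recursion in Table~\ref{tab:FPRep} all at once, and then applies $\abs{\delta_l}\le u$ to every factor, counting $k-1$ factors for $x_1$ and $k-j+1$ factors for $x_j$. You instead keep the recursion $\hat{z}_k=(\hat{z}_{k-1}+x_k)(1+\delta_k)$ and propagate the bound one step at a time via the triangle inequality and $\abs{1+\delta_k}\le 1+u$. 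The two arguments encode the same computation: your inductive step is precisely the mechanism by which the paper's closed form would itself be justified, so your version is arguably the more self-contained one (the paper asserts the explicit representation \say{follows} from the table without deriving it). Your treatment also handles the $k=1$ base case explicitly via the empty-sum convention, whereas the paper's final displayed inequality is stated only for $2\le k\le n$ even though the lemma claims $1\le k\le n$. The trade-off is that the paper's route makes the structure of each term (which roundoff factors multiply which $x_j$) visible at a glance, which is the intuition reused later in Lemma~\ref{thm:MkBound}; your route hides that structure inside the induction but requires only exponent bookkeeping, which you have verified correctly.
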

    	\begin{proof}
    		From Table \ref{tab:FPRep} follows the explicit representation for the computed partial sums,
    		\[
    			\hat{z}_k = x_1\prod\limits_{l=2}^{k}{(1 + \delta_l)} + \sum\limits_{j=2}^{k}\left({x_j\prod\limits_{l=j}^{k}{(1 + \delta_l)}}\right), \qquad 1 \leq k \leq n.
    		\]
 Applying the upper bounds $\abs{\delta_k} \leq u$ to the above expressions gives
    		\[
    			\abs{\hat{z}_k} \leq \abs{x_1}(1 + u)^{k-1} + \sum\limits_{j=2}^{k}\Big({\abs{x_j}(1 + u)^{k - j + 1}}\Big) \qquad 2 \leq k \leq n.
    		\] 
    	\end{proof}
   
   Next, we use Lemma \ref{thm:zHatBound}, to get an upper bound for the absolute difference between $M_k$ and $M_{k-1}$ for $2 \leq k \leq n$
    
    \begin{lemma}\label{thm:MkBound} Let $M_k$ and $m_k$ be defined by Table \ref{tab:MDef}. Then
        \[
            \abs{M_k - M_{k-1}} \leq um_{k-1} \qquad 2 \leq k \leq n.
        \]
    \end{lemma}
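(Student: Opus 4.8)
The plan is to reduce the statement to the two preceding lemmas and nothing more. First I would invoke the recursion from Lemma~\ref{thm:Recursion} to rewrite the increment as
\[
    M_k - M_{k-1} = \delta_k(\hat{z}_{k-1} + x_k),
\]
which isolates the single new roundoff error $\delta_k$ introduced at step $k$. Taking absolute values and applying the modeling bound $\abs{\delta_k} \leq u$ then gives
\[
    \abs{M_k - M_{k-1}} \leq u\abs{\hat{z}_{k-1} + x_k},
\]
so the entire lemma collapses to showing $\abs{\hat{z}_{k-1} + x_k} \leq m_{k-1}$.

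Next I would split $\abs{\hat{z}_{k-1} + x_k} \leq \abs{\hat{z}_{k-1}} + \abs{x_k}$ by the triangle inequality and substitute Lemma~\ref{thm:zHatBound} evaluated at index $k-1$, namely
\[
    \abs{\hat{z}_{k-1}} \leq \abs{x_1}(1+u)^{k-2} + \sum_{j=2}^{k-1}\abs{x_j}(1+u)^{k-j}.
\]
Adding $\abs{x_k}$ and observing that $\abs{x_k} = \abs{x_k}(1+u)^{k-k}$ is exactly the missing $j=k$ term, I would recognize the right-hand side as precisely $m_{k-1}$ from Table~\ref{tab:MDef}, since $m_{k-1} = \abs{x_1}(1+u)^{k-2} + \sum_{j=2}^{k}\abs{x_j}(1+u)^{k-j}$. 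Chaining the displays yields $\abs{M_k - M_{k-1}} \leq u m_{k-1}$.

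The only real obstacle here is index bookkeeping: one must check carefully that substituting $k-1$ into both $m_k$ and the $\hat{z}_k$-bound produces matching exponents $(1+u)^{k-j}$ and that the extra $\abs{x_k}$ term lands exactly on the upper summation limit $j=k$ of $m_{k-1}$. To guard against an off-by-one slip I would verify the boundary case $k=2$, where the sum is empty and the bound reduces to $\abs{M_2 - M_1} = u\abs{\hat{z}_1 + x_2} = u\abs{x_1 + x_2} \leq u(\abs{x_1}+\abs{x_2}) = u m_1$, confirming the arithmetic. No additional machinery is required; the content of the statement is already packaged in Lemmas~\ref{thm:Recursion} and \ref{thm:zHatBound}, and this result merely glues them together with the triangle inequality and the error bound $\abs{\delta_k} \leq u$.
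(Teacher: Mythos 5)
Your proposal is correct and follows essentially the same route as the paper's own proof: both use the recursion $M_k - M_{k-1} = \delta_k(\hat{z}_{k-1}+x_k)$ from Lemma~\ref{thm:Recursion}, apply $\abs{\delta_k}\leq u$ together with the bound on $\abs{\hat{z}_{k-1}}$ from Lemma~\ref{thm:zHatBound}, and absorb the extra $\abs{x_k}$ as the $j=k$ term of the sum defining $m_{k-1}$. Your explicit check of the $k=2$ boundary case is a welcome addition but not a departure from the paper's argument.
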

        \begin{proof}
        We recall that  for $2 \leq k \leq n$, $\abs{\delta_k} \leq u$.
            \begin{align*}
                \abs{M_k - M_{k-1}} &= \abs{\delta_k(\hat{z}_{k-1} + x_k)} \\
                &\leq u\Big(\abs{x_1}(1 + u)^{k-2} + \sum\limits_{j=2}^{k-1}{\abs{x_j}(1 + u)^{k - j}} + \abs{x_k}\Big) \\
                &= u\Big(\abs{x_1}(1 + u)^{k-2} + \sum\limits_{j=2}^{k}{\abs{x_j}(1 + u)^{k - j}}\Big) = um_{k-1} \qquad 2 \leq k \leq n.
            \end{align*}
            Therefore, we have our desired result of 
            \[
                \abs{M_k - M_{k-1}} \leq um_{k-1},\qquad 2 \leq k \leq n.
            \]
        \end{proof}
    
    We now show that the collection, $M$, of $M_k$ for $1 \leq k \leq n$ is a Martingale by Definition \ref{def:Martingale}. The requirement of $\E[M_n]$ being finite is satisfied since for $1 \leq k \leq n$, $M_k$ is a difference between two bounded numbers. We know both $z_k$ and $\hat{z}_k$ are bounded since they are sums of $k$ finite floating point numbers. Finally, we can say that each $\delta_k$ is independent of $M_1,\dots,M_{k-1}$ for $2 \leq k \leq n$ because each $M_k$ is written in terms of $\hat{z}_k$ and $z_k$, where the former  is in terms of constants and $\delta_l$ for $1 \leq l \leq k$, and the latter is a constant.
    Our next task is to prove $M$ satisfies the second requirement of Definition \ref{def:Martingale}.
    \begin{lemma} \label{thm:Martingale}
		For $2 \leq k \leq n$, let $M_k$ be defined by Table \ref{tab:MDef}, $\delta_k$ be zero mean, independent random variables, and $M_1 = 0$.
    \end{lemma}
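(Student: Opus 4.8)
The plan is to verify the single nontrivial requirement of Definition~\ref{def:Martingale}, namely the conditional-expectation identity $\E[M_k \mid M_1,\dots,M_{k-1}] = M_{k-1}$ for $2 \leq k \leq n$; the finiteness of $\E[\abs{M_n}]$ and the independence of $\delta_k$ from $M_1,\dots,M_{k-1}$ have already been argued in the text preceding the lemma, so the statement really asks only that $M$ closes the list of Martingale conditions. The natural starting point is the recursion established in Lemma~\ref{thm:Recursion}, which expresses $M_k$ in terms of quantities available at step $k-1$ together with the fresh error $\delta_k$.

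First I would take the conditional expectation of both sides of $M_k = M_{k-1} + \delta_k(\hat{z}_{k-1} + x_k)$ given $M_1,\dots,M_{k-1}$. Since $M_{k-1}$ is itself one of the conditioning variables, it passes through the conditional expectation unchanged. Next I would observe that $\hat{z}_{k-1}+x_k$ is determined by $M_1,\dots,M_{k-1}$: because each $M_j = \hat{z}_j - z_j$ with $z_j$ a constant, knowing $M_1,\dots,M_{k-1}$ is equivalent to knowing $\hat{z}_1,\dots,\hat{z}_{k-1}$, and $x_k$ is a fixed floating point number. Hence this factor can be pulled outside the conditional expectation, leaving
\[
	\E[M_k \mid M_1,\dots,M_{k-1}] = M_{k-1} + (\hat{z}_{k-1}+x_k)\,\E[\delta_k \mid M_1,\dots,M_{k-1}].
\]

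The final step, and the one carrying the real content, is to show that $\E[\delta_k \mid M_1,\dots,M_{k-1}] = 0$. This follows by combining the independence of $\delta_k$ from $M_1,\dots,M_{k-1}$ (so that conditioning does not alter its expectation) with the modeling assumption $\E[\delta_k]=0$; substituting then annihilates the second term and returns $M_{k-1}$, as required. I expect the main obstacle to be the rigorous justification of this reduction: the hypothesis stated in the paper is \emph{mean} independence of the $\delta_k$, so I would need to confirm that $\delta_k$ is mean-independent of the \emph{joint} collection $(M_1,\dots,M_{k-1})$, not merely of each $M_j$ separately, in order to legitimately replace the conditional expectation of $\delta_k$ by its unconditional one. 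Checking that this measurability-and-independence structure aligns exactly with the two clauses of Definition~\ref{def:Martingale} is the delicate point; the manipulation of the recursion itself is otherwise routine.
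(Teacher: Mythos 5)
Your proposal is correct and follows essentially the same route as the paper: apply the recursion from Lemma~\ref{thm:Recursion}, take conditional expectations, pull out $M_{k-1}$ and the factor $\hat{z}_{k-1}+x_k$, and use independence plus $\E[\delta_k]=0$ to annihilate the remaining term. If anything, you are more careful than the paper, which silently pulls $\hat{z}_{k-1}+x_k$ outside the conditional expectation without noting that it is determined by the conditioning variables.
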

        \begin{proof}
        We recall the linearity of the expectation, and Lemma \ref{thm:Recursion}.
        First we show an explicit case for $k=2$. 
        \begin{align*}
            \E[M_2|M_1] &= \E[M_1 + \delta_2(x_1 + x_2)|M_1] \\
            &= \E[M_1|M_1] + \E[\delta_2|M_1](x_1 + x_2) \\
            &= M_1 +  \E[\delta_2](x_1 + x_2) = M_1.
        \end{align*}
        Now, let $3 \leq k \leq n$
        \begin{align*}
            \E[M_k|M_1,\dots,M_{k-1}] &= \E[M_{k-1} + \delta_k(\hat{z}_{k-1} + x_k)|M_1,\dots,M_{k-1}] \\
            &= \E[M_{k-1}|M_1,\dots,M_{k-1}] + \E[\delta_k|M_1,\dots,M_{k-1}](\hat{z}_{k-1} + x_k) \\
            &= M_{k-1} \qquad 3 \leq k \leq n.
        \end{align*}
        
        \end{proof}

    Now, we know that $M$ forms a Martingale. So, we show our second probabilistic bound by applying Lemma \ref{thm:Hoeffding}.
    
    \begin{theorem}\label{thm:mProbBound}
        Let $x_k \in \R$ for $1 \leq k \leq n$ where $\sum_{k=1}^{n}{x_k} \neq 0$, $m_k$ in \ref{tab:variableConstruction}, and $0 < \delta < 1$.
        Then with probability at least $1 - \delta$
        \begin{equation}\label{eq:mProbBound}
            \dfrac{\abs{\hat{z}_n - z_n}}{\abs{z_n}} \leq \dfrac{u\sqrt{2\ln{\frac{2}{\delta}}}\sqrt{\sum_{k=1}^{n-1}{m_k^2}}}{\abs{z_n}}.
        \end{equation}
    \end{theorem}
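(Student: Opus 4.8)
The plan is to recognize that all the structural work has already been carried out in the preceding lemmas, so that Theorem~\ref{thm:mProbBound} follows by a direct application of the Azuma--Hoeffding Martingale inequality (Lemma~\ref{thm:Hoeffding}) to the sequence $M_1, \dots, M_n$. The strategy mirrors the proof of Theorem~\ref{thm:cProbBound}, except that I invoke the Martingale inequality in place of Azuma's inequality for independent sums.

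First I would verify that the hypotheses of Lemma~\ref{thm:Hoeffding} are met. By Lemma~\ref{thm:Martingale} together with the discussion preceding it, the collection $M_1, \dots, M_n$ is a Martingale with respect to itself and $M_1 = 0$, so I may take $B_k = M_k$. The bounded-difference hypothesis is supplied by Lemma~\ref{thm:MkBound}, which gives $\abs{M_k - M_{k-1}} \leq u m_{k-1}$ for $2 \leq k \leq n$. Matching this against the form $\abs{B_k - B_{k-1}} \leq b_{k-1}$ in Lemma~\ref{thm:Hoeffding}, I would set $b_{k-1} = u m_{k-1}$, equivalently $b_k = u m_k$ for $1 \leq k \leq n-1$.

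Next I would apply Lemma~\ref{thm:Hoeffding} directly. With probability at least $1 - \delta$,
\[
\abs{M_n - M_1} \leq \sqrt{2\ln{\frac{2}{\delta}}}\sqrt{\sum_{k=1}^{n-1}{b_k^2}} = \sqrt{2\ln{\frac{2}{\delta}}}\sqrt{\sum_{k=1}^{n-1}{u^2 m_k^2}}.
\]
Since $M_n - M_1 = \hat{z}_n - z_n$ and the common factor $u^2$ pulls out of the square root, this becomes
\[
\abs{\hat{z}_n - z_n} \leq u\sqrt{2\ln{\frac{2}{\delta}}}\sqrt{\sum_{k=1}^{n-1}{m_k^2}}.
\]
Finally I would divide both sides by $\abs{z_n}$, which is nonzero by hypothesis, to obtain \eqref{eq:mProbBound}, noting that the resulting inequality holds with the same probability at least $1 - \delta$.

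The one point requiring care is purely bookkeeping: aligning the index conventions of Lemma~\ref{thm:Hoeffding} --- where the increment bound is indexed by $b_{k-1}$ but the conclusion sums $b_k^2$ from $k=1$ to $n-1$ --- with the statement of Lemma~\ref{thm:MkBound}. Because $M_1 = 0$, there are exactly $n-1$ nonzero increments to bound, which is consistent both with the relaxed indexing noted after Lemma~\ref{thm:Hoeffding} and with the range $1 \leq k \leq n-1$ on which $m_k$ is defined in Table~\ref{tab:MDef}. I expect no genuine analytic obstacle here; the difficulty of the argument was entirely absorbed into establishing the Martingale property (Lemma~\ref{thm:Martingale}) and the increment bound (Lemma~\ref{thm:MkBound}).
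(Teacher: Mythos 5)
Your proposal is correct and follows essentially the same route as the paper: both identify $M_1=0$, invoke Lemma~\ref{thm:Hoeffding} with the increment bounds $b_k = u\,m_k$ supplied by Lemma~\ref{thm:MkBound} and the Martingale property from Lemma~\ref{thm:Martingale}, and then divide by $\abs{z_n}$. Your version is somewhat more explicit about checking the hypotheses and the index bookkeeping, but there is no substantive difference.
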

        \begin{proof}
            We first recognize that by construction of $M$, we know that $M_1 = x_1 - x_1 = 0$. Then we apply Lemma \ref{thm:Hoeffding}.
            \begin{align*}
                \abs{\hat{z}_n - z_n} &= \abs{M_n} = \abs{M_n - M_1} \\
                &\leq u\sqrt{2\ln{\frac{2}{\delta}}}\sqrt{\sum\limits_{k=1}^{n-1}{m_k^2}}.
            \end{align*}
            Then, we divide both sides by $\abs{z_n}$ to get our desired bound,
            \[
            		\dfrac{\abs{\hat{z}_n - z_n}}{\abs{z_n}} \leq \dfrac{u\sqrt{2\ln{(\frac{2}{\delta})}}\sqrt{\sum_{k=1}^{n-1}{m_k^2}}}{\abs{z_n}},
            \] 
            which holds with probability at least $1 - \delta$.
        \end{proof}
	\paragraph{Important note} Due to the definition of $m_k$ in Table \ref{tab:MDef}, the computation of \eqref{eq:cProbBound}, we are unable to calculate this bound for large dimensions, above approximately $n = 10^6$, in an informative time frame. A potential workaround is discussed in Section \ref{sec:Conc}.
    \section{Numerical Experiments} \label{sec:Numerical Experiments}
    \subsection{Hardware}\label{sec:Hardware} 
    The final results were gathered with \verb|matlab2018a| under the NC State HPC\footnote{More information can be found at 
    \url{https://wp.math.ncsu.edu/it/high-powered-computing-cluster/}}
    \subsection{Design} Our experiments use vectors of normally distributed random variables with mean zero and variance of one, and uniformly distributed random variables from zero to one. To ensure that RAM capacity is not an issue, we generate each of the $n$ elements during the function that computes our bounds. To ensure repeatability, we used the Random package to seed before each bound calculation call. The seed used is 123. Specifics for the programs used are in Section \ref{sec:Algorithms}. In our experiments, unit roundoff, denoted $u$, is as stated in Table \ref{tab:UnitRoundoff}. These algorithms are designed to scale to an arbitrary precision, however we only use the case of the inputs being 32 bit floating point. If you want to scale to a floating point with more precision digits than the IEEE 64 bit floating point, assign the \say{true} precision in the algorithms to 256 bit precision. The choice to use 256 bits for precision is simply because 256 bit is the default for Julia's BigFloat data type.
    \subsection{Algorithms} \label{sec:Algorithms}
    In this section, we will first list the two algorithms for generating our bounds. Next, we will describe how we generate a graph, display the graph, and give an interpretation of the results. Finally, we will make final interpretations.
    \paragraph{Naming of Bounds}
    Within the bounds, we refer to the below equation numbers for the bounds for ease of reference.
    \begin{align}
		\abs{\dfrac{\hat{z}_n - z_n}{z_n}} &\leq \sqrt{n}\dfrac{{\sum_{k=1}^{n}{c_k}}}{\abs{z_n}},  \label{eq:cDetBoundGraphs}\\
		\abs{\dfrac{\hat{z}_n - z_n}{z_n}} &\leq \sqrt{2\ln{\frac{2}{\delta}}}\dfrac{\sqrt{\sum_{k=1}^{n}{c_k^2}}}{\abs{z_n}}, \label{eq:cProbBoundGraphs}\\
		\abs{\dfrac{\hat{z}_n - z_n}{z_n}} &\leq u\sqrt{2\ln{\frac{2}{\delta}}}\dfrac{\sqrt{\sum_{k=1}^{n-1}{m_k^2}}}{\abs{z_n}}. \label{eq:mProbBoundGraphs}
    \end{align}
        
    \paragraph{Common Inputs} By design, all of our algorithms have the same inputs, which we will list here to save space.
    \begin{enumerate}
        \item $n$: the dimension of vector we are simulating
        \item $\delta$: the probability of failure as in Lemma \ref{thm:Azuma}. We use $10^{-16}$ in our graphs
        \item T: the type of Floating Point (FP) number we want. The following graphs use 32-bit floating point, however, the algorithm works for any FP type as long as one can determine machine epsilon.
        \item rf: The function that is used to generate the variables. We use Julia's Base.randn function for this generation unless otherwise specified.
    \end{enumerate}
    \paragraph{Common Output} By design, we also return the relative error in all of our algorithms.
    \begin{itemize}
    	\item Relative Error: $\abs{\dfrac{\hat{z}_n - z_n}{z_n}}$
    \end{itemize}
         
    \begin{algorithm} 
        \caption{Computation of \eqref{eq:cDetBoundGraphs}, \eqref{eq:cProbBoundGraphs}, and the relative error for some $n\in \mathbb{N}$}
        \outputs{} Equations \ref{eq:cDetBoundGraphs}, \ref{eq:cProbBoundGraphs}, and relErr 
        \begin{algorithmic}[1]
            \IF{T is 64 bit Floating Point} 
            \STATE TTrue $\leftarrow$ 256 bit Floating Point
            \ELSE
            \STATE TTrue $\leftarrow$ 64 bit Floating Point
            \ENDIF
            \STATE xSum $\leftarrow$ 0 of type T
            \STATE xTrueSum, cSquaredSum, cSum $\leftarrow$ 0 of type TTrue
            \FOR{$1 \leq k \leq n$}
            \STATE $ \text{x}_k \leftarrow$ number generated by rf of type T
            \STATE $\text{xTrue}_k \leftarrow$ $\text{x}_k$ cast to type TTrue
            \STATE $c_k \leftarrow $ as in Table \ref{tab:variableConstruction}
            \STATE cSquaredSum $\leftarrow$ cSquaredSum + $c_k^2$
            \STATE cSum $\leftarrow$ cSum + $c_k$
            \STATE xSum $\leftarrow$ xSum + $\text{x}_k$
            \STATE xTrueSum $\leftarrow$ xTrueSum + $\text{xTrue}_k$
            \ENDFOR
            \STATE compute Equation \ref{eq:cDetBoundGraphs}
            \STATE compute Equation \ref{eq:cProbBoundGraphs}
            \STATE compute relErr
            \RETURN Equations \ref{eq:cDetBoundGraphs}, \ref{eq:cProbBoundGraphs}, and relErr of type TTrue
        \end{algorithmic}
    \end{algorithm}
    \begin{algorithm} 
        \caption{Computation of \eqref{eq:mProbBoundGraphs} for some $n\in \mathbb{N}$}
        \outputs{} Equations \ref{eq:mProbBoundGraphs} and relErr
        \begin{algorithmic}[1]
            \IF{T is 64 bit Floating Point} 
            \STATE TTrue $\leftarrow$ 256 bit Floating Point
            \ELSE
            \STATE TTrue $\leftarrow$ 64 bit Floating Point
            \ENDIF
            \STATE mSquaredSum, xSum $\leftarrow$ 0 of type T
            \STATE xTrueSum $\leftarrow$ 0 of type TTrue
            \FOR{$1 \leq k \leq n$}
            \STATE $ \text{x}_k \leftarrow$ number generated by rf of type T
            \STATE $\text{xTrue}_k \leftarrow$ $\text{x}_k$ cast to type TTrue
            \IF{$k \neq 1$}
            \STATE $m_{k-1} \leftarrow $ as in Table \ref{tab:variableConstruction}
            \STATE mSquaredSum $\leftarrow$ mSquaredSum + $m_{k-1}^2$
            \ENDIF
            \STATE xSum $\leftarrow$ xSum + $\text{x}_k$
            \STATE xTrueSum $\leftarrow$ xTrueSum + $\text{xTrue}_k$
            \ENDFOR
            \STATE compute Equation \ref{eq:mProbBoundGraphs}
            \STATE compute relErr
            \RETURN Equation \ref{eq:mProbBoundGraphs} and relErr of type TTrue
        \end{algorithmic}
    \end{algorithm}
    
    \pagebreak    
    \subsection{Graphs}
    In the following graphs, we start our vector dimensions at the step size since we are assuming exact representation of our generated numbers and this will ensure each data point gives us information. 

    \begin{figure}[H]
   	 	\centering
   	 	\includegraphics[width=.75\textwidth]{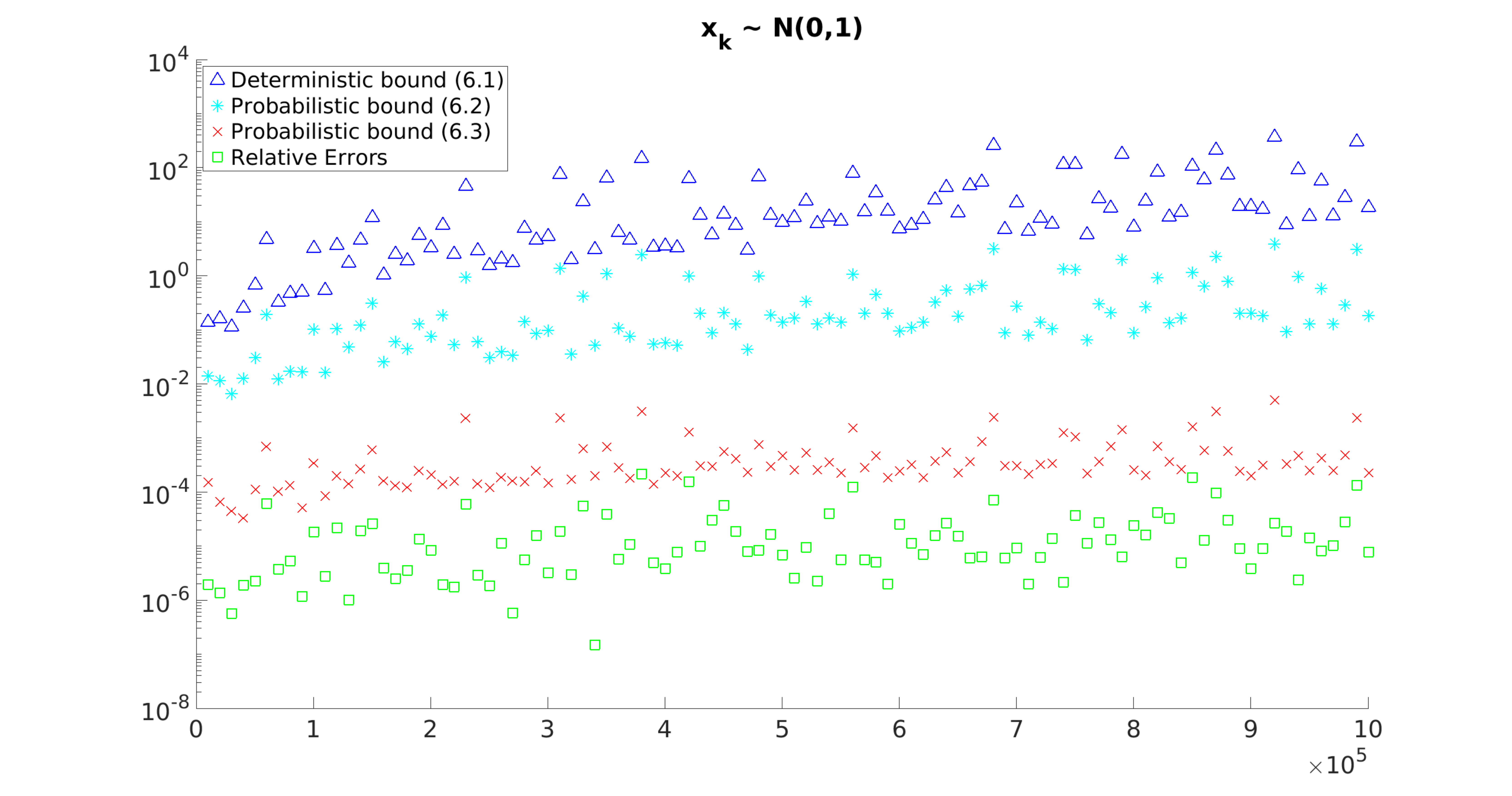}
   	 	\caption{Comparing \eqref{eq:cDetBoundGraphs}, \eqref{eq:cProbBoundGraphs}, and \eqref{eq:mProbBoundGraphs} against the actual relative errors experienced for $10^4 \leq n \leq 10^6$ in steps of $10^4$. Our data is normally distributed random variables in single precision.}\label{fig:32BitNormal}
    \end{figure}
    In Figure \ref{fig:32BitNormal}, our data is restricted to dimensions lower than $10^7$ due to the computational cost of our bounds in large vector dimensions. The best bound, \eqref{eq:mProbBoundGraphs}, is between one and two orders of magnitude of the true relative error, which is by far our best estimator even when compared to \eqref{eq:cProbBoundGraphs}. However, this comes with the trade off of approximately 500 times the computation time on the HPC hardware\footnote{Information from the Systems Administrator is here: \url{https://wp.math.ncsu.edu/it/high-powered-computing-cluster/}{}} listed in Section \ref{sec:Hardware}. Even if higher dimensions are needed, \eqref{eq:cProbBoundGraphs} shows promise since it is about two orders of magnitude tighter than \eqref{eq:cDetBoundGraphs} along with being less computationally expensive.
    \begin{figure}[H]
   	 	\centering
   	 	\includegraphics[width=.75\textwidth]{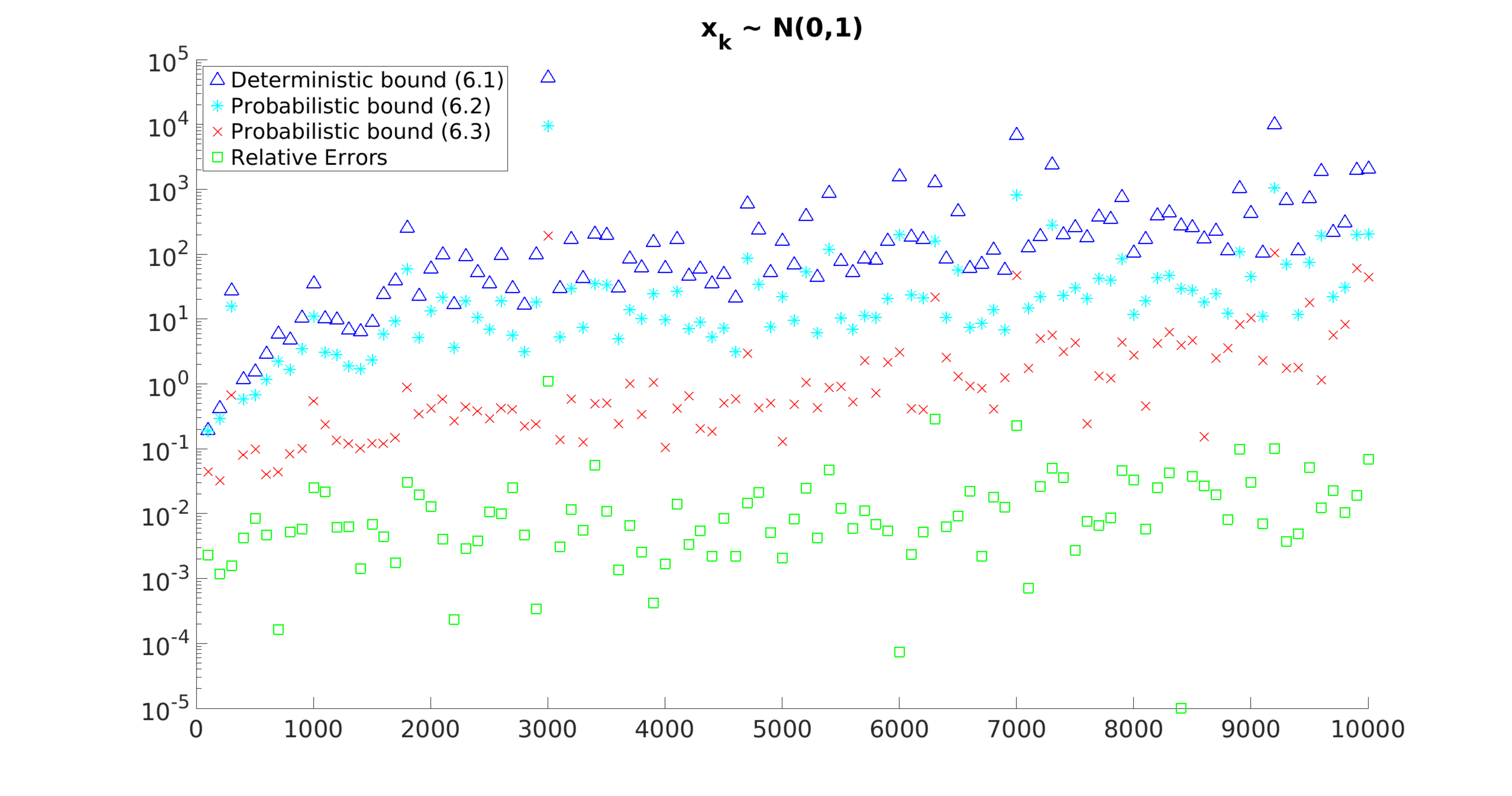}
   	 	\caption{Comparing \eqref{eq:cDetBoundGraphs}, \eqref{eq:cProbBoundGraphs}, and \eqref{eq:mProbBoundGraphs} against the actual relative errors experienced for $10^2 \leq n \leq 10^4$ in steps of $10^2$. Our data is normally distributed random variables in half precision.}\label{fig:16BitNormal}
    \end{figure}
    
    In Figure \ref{fig:16BitNormal}, our dimensions shrink due because  \eqref{eq:cDetBoundGraphs} and \eqref{eq:cProbBoundGraphs} grow larger than one for larger dimensions meaning they no longer predict any accuracy. We notice the same pattern of \eqref{eq:mProbBoundGraphs} as in Figure \ref{fig:32BitNormal}, and the extra tightness becomes exceptionally useful in the half precision case since even at these dimensions, we only estimate the true relative error to be below one with \eqref{eq:mProbBoundGraphs}. Comparing the general trends of \eqref{eq:mProbBoundGraphs} and the true relative error seem to both be similar.
    
    \begin{figure}[H]
   	 	\centering
   	 	\includegraphics[width=.75\textwidth]{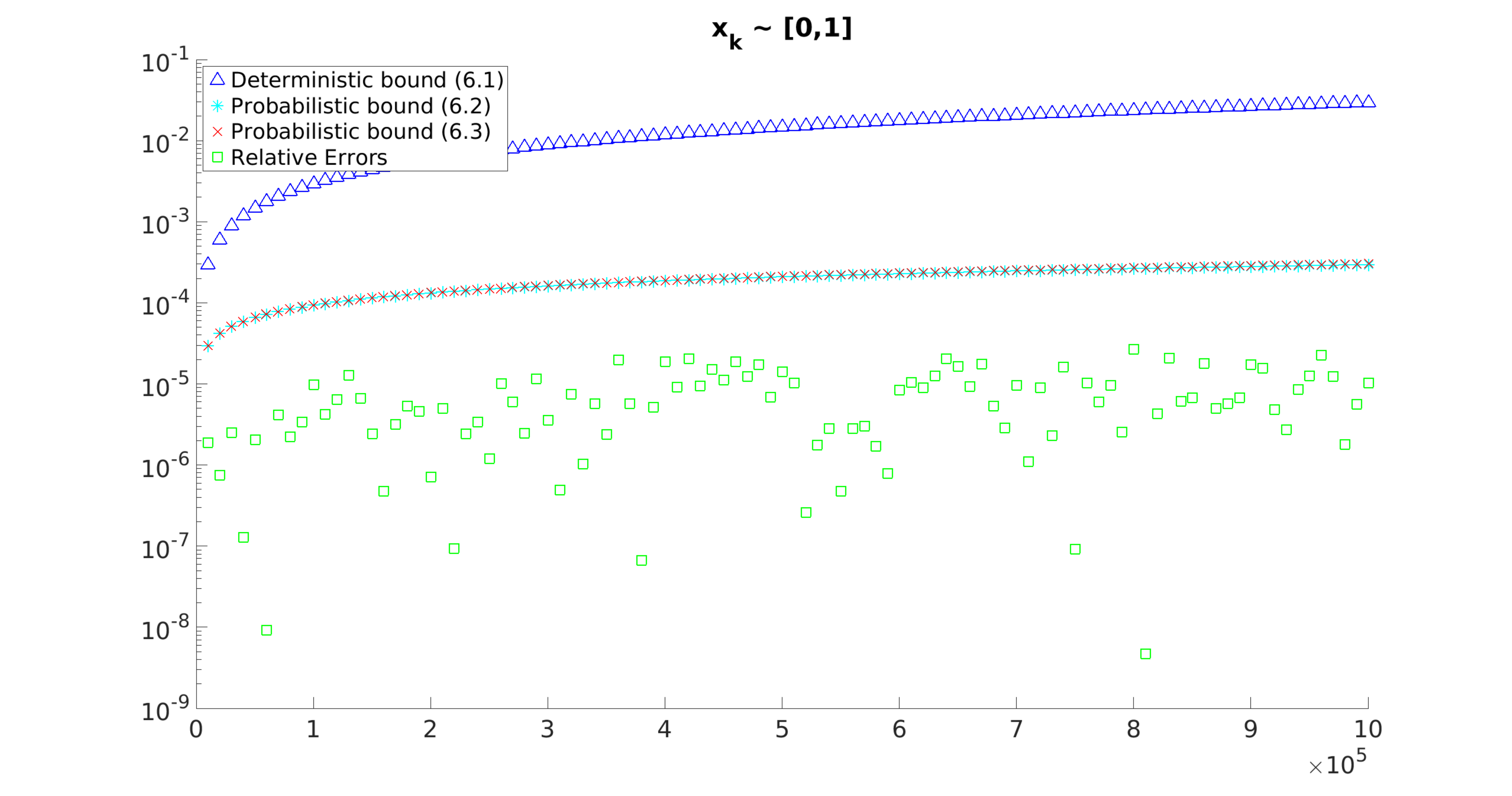}
   	 	\caption{Comparing \eqref{eq:cDetBoundGraphs}, \eqref{eq:cProbBoundGraphs}, and \eqref{eq:mProbBoundGraphs} against the actual relative errors experienced for $10^4 \leq n \leq 10^6$ in steps of $10^4$. Our data is uniformly distributed random variables between 0 and 1 in single precision.}\label{fig:32BitUniform}
    \end{figure}
    
	    In Figure \ref{fig:32BitUniform}, the general behavior of the true relative errors becomes much clearer than in our previous figures. There appears to be a general trend to one, even though we are much below one at our dimensions, so again higher dimensions need to be tested to confirm this claim. The accuracy gains in probabilistic bounds are highlighted by how close \eqref{eq:cProbBoundGraphs} is to \eqref{eq:mProbBoundGraphs}, and the fact that both are two orders of magnitude tighter than \eqref{eq:cDetBoundGraphs} along with being within one order of magnitude of the true relative error.

    \begin{figure}[H]
   	 	\centering
   	 	\includegraphics[width=.75\textwidth]{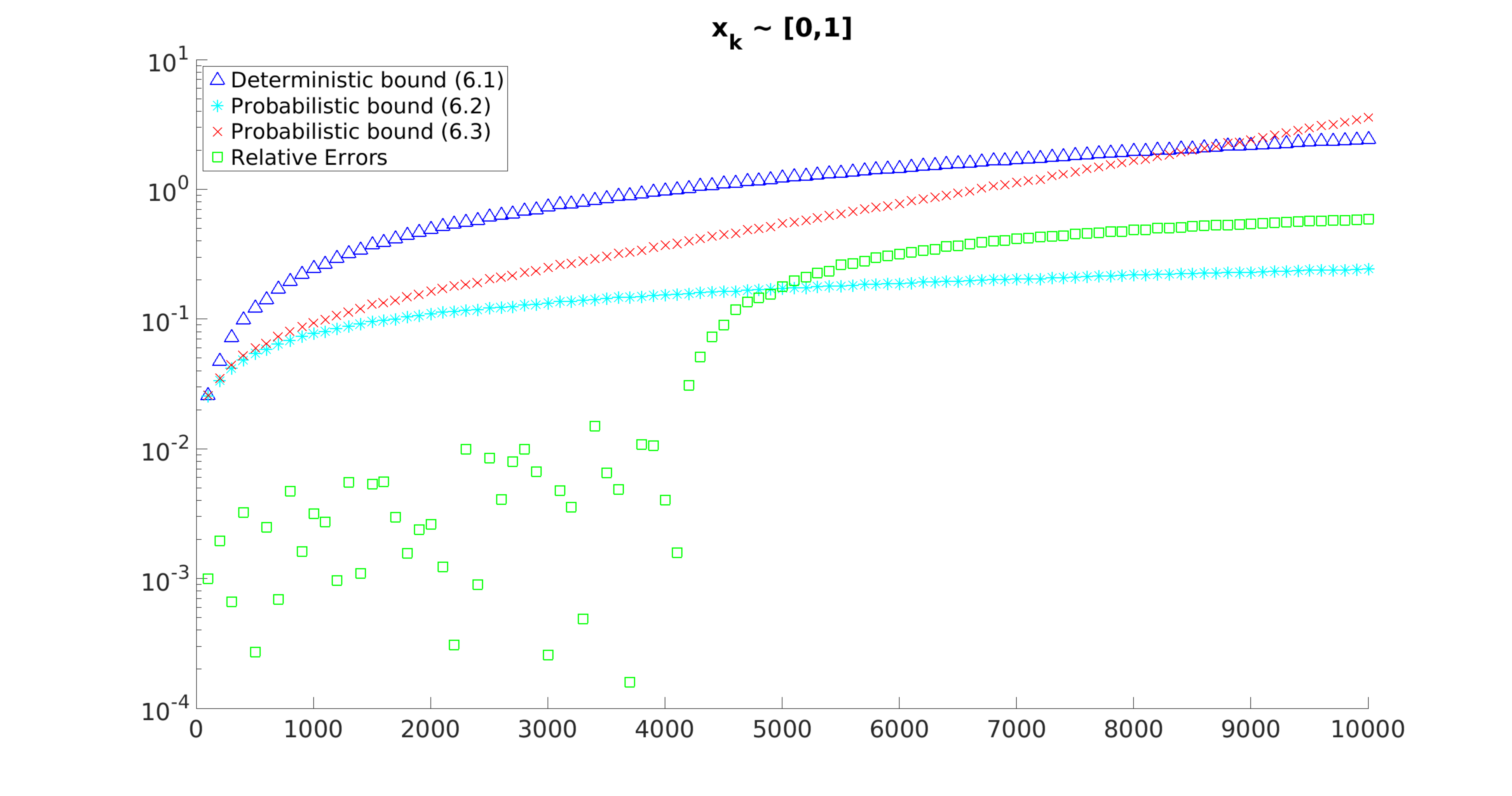}
   	 	\caption{Comparing \eqref{eq:cDetBoundGraphs}, \eqref{eq:cProbBoundGraphs}, and \eqref{eq:mProbBoundGraphs} against the actual relative errors experienced for $10^2 \leq n \leq 10^4$ in steps of $10^2$. Our data is uniformly distributed random variables between 0 and 1 in single precision.}\label{fig:16BitUniform}
    \end{figure}

        In Figure \ref{fig:16BitUniform}, an issue arises. \eqref{eq:cProbBoundGraphs} is no longer an upper bound for the true relative errors. 
        This is either an artifact of the precision being low or a fundamental error in the bound itself, however as previously stated tests for larger 
        dimensions are needed to test this claim. However, \eqref{eq:cDetBoundGraphs} and \eqref{eq:cProbBoundGraphs} are also uninformative because no 
        accuracy is estimated for $n \gtrapprox 6500$.
    
	\section{Conclusion and Future Work}  \label{sec:Conc} We gave roundoff error bounds along with numerical experiments for the sequentially computed sum of $n$ random real numbers. Our experiments confirm that, in most circumstances, the probabilistic bounds are between one and two orders of magnitude more accurate in estimating the actual relative error when compared to known deterministic bounds. We also have found that our bounds come in two types. The first being the most accurate, but takes much more time to compute, \eqref{eq:mProbBoundGraphs}. While the second type is much faster to compute, but not nearly as accurate \eqref{eq:cDetBoundGraphs} and \eqref{eq:cProbBoundGraphs}. A potential fix for this issue would be to implement the sum in such a way that it can be parallelized or to construct a better implementation of \eqref{eq:mProbBoundGraphs}.

        \paragraph{Issues} In Figure \ref{fig:16BitUniform}, \eqref{eq:cProbBoundGraphs} is no longer an upper bound for vectors composed of low precision elements of the same sign, or the failure probability is not the same as our chosen $\delta$ value. A potential solution could be to introduce a bound that depends on the structure of the vector being summed as described in \cite[Section 5.2]{Higham2019}. Further testing for single precision of higher dimensions than $10^7$ are also needed to see if Figure \ref{fig:16BitUniform} is an artifact of the lower precision computations or if it's an issue with the bound itself.

    \bibliographystyle{siam}
    \bibliography{RTGBib}

\begin{thebibliography}{10}

\bibitem{Babuska2018}
{\sc I.~Babu\v{s}ka and G.~S\"{o}derlind}, {\em On roundoff error growth in
  elliptic problems}, ACM Trans. Math. Software, 44 (2018), pp.~Art. 33, 22.

\bibitem{Bareiss1980}
{\sc E.~H. Bareiss and J.~L. Barlow}, {\em Roundoff error distribution in fixed
  point multiplication}, BIT, 20 (1980), pp.~247--250.

\bibitem{Barlow1985}
{\sc J.~L. Barlow and E.~H. Bareiss}, {\em On roundoff error distributions in
  floating point and logarithmic arithmetic}, Computing, 34 (1985),
  pp.~325--347.

\bibitem{Barlow1985a}
\leavevmode\vrule height 2pt depth -1.6pt width 23pt, {\em Probabilistic error
  analysis of {G}aussian elimination in floating point and logarithmic
  arithmetic}, Computing, 34 (1985), pp.~349--364.

\bibitem{Chung2006}
{\sc F.~Chung and L.~Lu}, {\em Concentration inequalities and {Martingale}
  inequalities: A survey}, Internet Math., 3 (2006), pp.~79--127.

\bibitem{Higham2002}
{\sc N.~J. Higham}, {\em Accuracy and stability of numerical algorithms},
  Society for Industrial and Applied Mathematics (SIAM), Philadelphia, PA,
  second~ed., 2002.

\bibitem{Higham2019}
{\sc N.~J. Higham and T.~Mary}, {\em A new approach to probabilistic rounding
  error analysis}, SIAM J. Sci. Comput., 41 (2019), pp.~A2815--A2835.

\bibitem{Higham2020}
{\sc N.~J. Higham and T.~Mary}, {\em {Sharper Probabilistic Backward Error
  Analysis for Basic Linear Algebra Kernels with Random Data}}.
\newblock working paper or preprint, Jan. 2020.

\bibitem{Hull1966}
{\sc T.~E. Hull and J.~R. Swenson}, {\em Tests of probabilistic models for the
  propagation of roundoff errors}, Communications of the ACM, 9 (1966),
  pp.~108--113.

\bibitem{IEEE}
{\sc {IEEE Computer Society}}, {\em {IEEE} standard for floating-point
  arithmetic, {IEEE} {Std} 754–2019"}, July 2019.

\bibitem{Ipsen2020}
{\sc I.~C.~F. Ipsen and H.~Zhou}, {\em Probabilistic error analysis for inner
  products}, SIAM J. Matrix Anal. Appl., to appear (2020).

\bibitem{Kahan1996}
{\sc W.~Kahan}, {\em The improbability of probabilistic error analyses for
  numerical computations},  (1996).

\bibitem{Mitzenmacher2006}
{\sc M.~Mitzenmacher and E.~Upfal}, {\em Probability and Computing: Randomized
  Algorithms and Probabilistic Analysis}, Cambridge University Press, New York,
  2006.

\bibitem{Tienari1970}
{\sc M.~Tienari}, {\em A statistical model of roundoff error for varying length
  floating-point arithmetic}, Nordisk Tidskr. Informationsbehandling (BIT), 10
  (1970), pp.~355--365.

\bibitem{Neumann1947}
{\sc J.~{von Neumann} and H.~H. Goldstine}, {\em Numerical inverting of
  matrices of high order}, Bull. Amer. Math. Soc., 53 (1947), pp.~1021--1099.

\bibitem{FloatExample}
{\sc Wikimedia}, {\em File:float example.svg}.
\newblock Wikimedia Upload, Jan. 2008.
\newblock \url{https://commons.wikimedia.org/wiki/File:Float_example.svg}.
  License: \url{https://creativecommons.org/licenses/by-sa/3.0/}.

\end{thebibliography}
\end{document}